\numberwithin{equation}{section}
\begin{document}
\title{Boundedness of vector-valued intrinsic square functions in Morrey type spaces}
\author{Hua Wang \footnote{E-mail address: wanghua@pku.edu.cn.}\\
\footnotesize{College of Mathematics and Econometrics, Hunan University, Changsha 410082, P. R. China}}
\date{}
\maketitle

\begin{abstract}
In this paper, we will obtain the strong type and weak type estimates for vector-valued analogues of intrinsic square functions in the weighted Morrey spaces $L^{p,\kappa}(w)$ when $1\leq p<\infty$, $0<\kappa<1$, and in the generalized Morrey spaces $L^{p,\Phi}$ for $1\le p<\infty$, where $\Phi$ is a growth function on $(0,\infty)$ satisfying the doubling condition. \\
MSC(2010): 42B25; 42B35 \\
Keywords: Intrinsic square functions; vector-valued inequalities; weighted Morrey spaces; $A_p$ weights; generalized Morrey spaces
\end{abstract}

\section{Introduction and main results}

The intrinsic square functions were first introduced by Wilson in \cite{wilson1,wilson2}; they are defined as follows. For $0<\alpha\le1$, let ${\mathcal C}_\alpha$ be the family of functions $\varphi$ defined on $\mathbb R^n$ such that $\varphi$ has support containing in $\{x\in\mathbb R^n: |x|\le1\}$, $\int_{\mathbb R^n}\varphi(x)\,dx=0$, and for all $x, x'\in \mathbb R^n$,
\begin{equation*}
\big|\varphi(x)-\varphi(x')\big|\leq \big|x-x'\big|^{\alpha}.
\end{equation*}
For $(y,t)\in {\mathbb R}^{n+1}_{+}=\mathbb R^n\times(0,\infty)$ and $f\in L^1_{{loc}}(\mathbb R^n)$, we set
\begin{equation}
A_\alpha(f)(y,t)=\sup_{\varphi\in{\mathcal C}_\alpha}\big|f*\varphi_t(y)\big|=\sup_{\varphi\in{\mathcal C}_\alpha}\bigg|\int_{\mathbb R^n}\varphi_t(y-z)f(z)\,dz\bigg|.
\end{equation}
Then we define the intrinsic square function of $f$ (of order $\alpha$) by the formula
\begin{equation}
\mathcal S_{\alpha}(f)(x)=\left(\iint_{\Gamma(x)}\Big(A_\alpha(f)(y,t)\Big)^2\frac{dydt}{t^{n+1}}\right)^{1/2},
\end{equation}
where $\Gamma(x)$ denotes the usual cone of aperture one:
\begin{equation*}
\Gamma(x)=\big\{(y,t)\in{\mathbb R}^{n+1}_+:|x-y|<t\big\}.
\end{equation*}
Let $\vec{f}=(f_1,f_2,\ldots)$ be a sequence of locally integrable functions on $\mathbb R^n$. For any $x\in\mathbb R^n$, Wilson \cite{wilson2} also defined the vector-valued intrinsic square functions of $\vec{f}$ by
\begin{equation}
\mathcal S_\alpha(\vec{f})(x)=\bigg(\sum_{j=1}^\infty\big|\mathcal S_\alpha(f_j)(x)\big|^2\bigg)^{1/2}.
\end{equation}

In \cite{wilson2}, Wilson has established the following two theorems.

\newtheorem*{thma}{Theorem A}

\begin{thma}[\cite{wilson2}]
Let $0<\alpha\le1$, $1<p<\infty$ and $w\in A_p$(\mbox{Muckenhoupt weight class}). Then there exists a constant $C>0$ independent of $\vec{f}=(f_1,f_2,\ldots)$ such that
\begin{equation*}
\bigg\|\bigg(\sum_{j}\big|\mathcal S_\alpha(f_j)\big|^2\bigg)^{1/2}\bigg\|_{L^p_w}
\leq C \bigg\|\bigg(\sum_{j}\big|f_j\big|^2\bigg)^{1/2}\bigg\|_{L^p_w}.
\end{equation*}
\end{thma}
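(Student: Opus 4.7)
The plan is to derive the $\ell^2$-valued weighted inequality from the scalar weighted estimate for $\mathcal S_\alpha$ by invoking Rubio de Francia's extrapolation theorem. The starting ingredient, which carries the real analytic content of the statement, is the scalar weighted norm inequality
\begin{equation*}
\|\mathcal S_\alpha(f)\|_{L^p_w} \leq C \|f\|_{L^p_w}, \qquad 1<p<\infty,\ w\in A_p,
\end{equation*}
which was proved by Wilson in \cite{wilson1,wilson2}. His proof proceeds by comparing $\mathcal S_\alpha(f)$ pointwise with the classical Lusin area integral $S(f)$ and a Fefferman--Stein $g^*_\lambda$-type function, both of which admit the required weighted $L^p$ bounds through standard Calder\'on--Zygmund arguments; the novelty of $\mathcal S_\alpha$ is that the supremum over $\varphi\in\mathcal C_\alpha$ is taken \emph{inside} the area integral, but the kernel regularity built into $\mathcal C_\alpha$ is exactly what makes the comparison go through.

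Once the scalar bound is available for the whole range $1<p<\infty$ and every $w\in A_p$, I would invoke Rubio de Francia's extrapolation theorem in its vector-valued form: for a sublinear operator $T$ satisfying $\|Tf\|_{L^{p_0}_w}\leq C\|f\|_{L^{p_0}_w}$ for some fixed $p_0\in(1,\infty)$ and all $w\in A_{p_0}$, one has
\begin{equation*}
\bigg\|\Big(\sum_j |Tf_j|^q\Big)^{1/q}\bigg\|_{L^p_w}\leq C \bigg\|\Big(\sum_j |f_j|^q\Big)^{1/q}\bigg\|_{L^p_w}
\end{equation*}
for every $1<p,q<\infty$ and every $w\in A_p$. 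Specializing this to $T=\mathcal S_\alpha$ and $q=2$ produces Theorem A at once.

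The main obstacle is not the vector-valued passage, which is automatic via extrapolation, but the scalar weighted estimate itself: the subtlety is that one must control the entire family $\{\varphi_t\}_{\varphi\in\mathcal C_\alpha}$ \emph{simultaneously}, which forces one to work with $A_\alpha(f)(y,t)$ rather than a fixed convolution. Wilson's pointwise comparison neatly circumvents this by transferring the supremum onto classical objects whose weighted theory is well understood. A direct alternative to extrapolation would be to establish a pointwise majorization of $\mathcal S_\alpha(\vec f)(x)$ by a Fefferman--Stein vector-valued maximal operator acting on $\vec f$ and then apply the weighted Fefferman--Stein inequality; such a majorization, however, tends to be substantially more technical than the scalar-plus-extrapolation route, so I would prefer the latter.
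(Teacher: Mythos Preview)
The paper does not give its own proof of Theorem~A: it is quoted verbatim from Wilson \cite{wilson2} and used as a black box in the proofs of Theorems~\ref{mainthm:1}--\ref{mainthm:4}. There is therefore nothing in the paper to compare your argument against.

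That said, your proposal is correct and is in fact the standard route to such results: the scalar weighted bound $\|\mathcal S_\alpha f\|_{L^p_w}\le C\|f\|_{L^p_w}$ for all $w\in A_p$ is established by Wilson, and the vector-valued form of Rubio de Francia extrapolation (see e.g.\ \cite{garcia}) then yields the $\ell^2$-valued inequality automatically for every $1<p<\infty$ and $w\in A_p$. Your identification of the scalar estimate as the real analytic content is exactly right; the passage to $\ell^2$ sums is soft once the full $A_p$ range is in hand. The alternative you mention---a direct pointwise domination by a vector-valued maximal function---would also work but is, as you note, more laborious and unnecessary here.
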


\newtheorem*{thmb'}{Theorem \=B}

\begin{thmb'}[\cite{wilson2}]
Let $0<\alpha\le1$ and $p=1$. Then for any given weight function $w$ and $\lambda>0$, there exists a constant $C>0$ independent of $\vec{f}=(f_1,f_2,\ldots)$ and $\lambda$ such that
\begin{equation*}
w\bigg(\bigg\{x\in\mathbb R^n:\bigg(\sum_{j}\big|\mathcal S_{\alpha}(f_j)(x)\big|^2\bigg)^{1/2}>\lambda\bigg\}\bigg)
\leq \frac{C}{\lambda}\int_{\mathbb R^n}\bigg(\sum_{j}\big|f_j(x)\big|^2\bigg)^{1/2}Mw(x)\,dx,
\end{equation*}
where $M$ denotes the standard Hardy--Littlewood maximal operator.
\end{thmb'}

If we take $w\in A_1$, then $M(w)(x)\le C\cdot w(x)$ for a.e.$x\in\mathbb R^n$ by the definition of $A_1$ weights (see Section 2). Hence, as a straightforward consequence of Theorem \={B}, we obtain

\newtheorem*{thmb}{Theorem B}

\begin{thmb}
Let $0<\alpha\le1$, $p=1$ and $w\in A_1$. Then there exists a constant $C>0$ independent of $\vec{f}=(f_1,f_2,\ldots)$ such that
\begin{equation*}
\bigg\|\bigg(\sum_{j}\big|\mathcal S_{\alpha}(f_j)\big|^2\bigg)^{1/2}\bigg\|_{WL^1_w}
\leq C \bigg\|\bigg(\sum_{j}\big|f_j\big|^2\bigg)^{1/2}\bigg\|_{L^1_w}.
\end{equation*}
\end{thmb}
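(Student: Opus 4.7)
The proof should be presented as an immediate corollary of Theorem $\bar{\text{B}}$, since the author has essentially announced the derivation in the paragraph preceding the statement. The plan is to apply Theorem $\bar{\text{B}}$ with the given weight $w$ and then replace the factor $Mw(x)$ appearing on the right-hand side by a constant multiple of $w(x)$, using the defining property of the $A_1$ class.

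More concretely, I would first recall the $A_1$ condition in the form that will be used: a weight $w$ belongs to $A_1$ precisely when there is a constant $C_0>0$ such that
\begin{equation*}
Mw(x)\le C_0\, w(x) \quad \text{for a.e. } x\in\mathbb R^n.
\end{equation*}
(This equivalence is standard and is recorded in Section 2 of the paper, so I would simply cite it.) Next, for any $\lambda>0$ and $\vec f=(f_1,f_2,\ldots)$, I would apply Theorem $\bar{\text{B}}$ to the given weight $w$, obtaining
\begin{equation*}
w\!\left(\Big\{x\in\mathbb R^n:\Big(\textstyle\sum_j|\mathcal S_\alpha(f_j)(x)|^2\Big)^{1/2}>\lambda\Big\}\right)
\le \frac{C}{\lambda}\int_{\mathbb R^n}\Big(\textstyle\sum_j|f_j(x)|^2\Big)^{1/2} Mw(x)\,dx.
\end{equation*}

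Substituting the pointwise bound $Mw(x)\le C_0 w(x)$ into the integral then yields
\begin{equation*}
w\!\left(\Big\{x\in\mathbb R^n:\Big(\textstyle\sum_j|\mathcal S_\alpha(f_j)(x)|^2\Big)^{1/2}>\lambda\Big\}\right)
\le \frac{C\cdot C_0}{\lambda}\int_{\mathbb R^n}\Big(\textstyle\sum_j|f_j(x)|^2\Big)^{1/2} w(x)\,dx,
\end{equation*}
which is exactly the weak-type $(1,1)$ bound with respect to the measure $w(x)\,dx$ claimed in Theorem B, after taking the supremum over $\lambda>0$ in the definition of the $WL^1_w$ norm.

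There is no real obstacle here; the argument is mechanical once Theorem $\bar{\text{B}}$ is taken as a black box. The only thing worth emphasizing in the writeup is that the constant in Theorem B depends on $w$ only through the $A_1$ constant of $w$ (i.e.\ through $C_0$), since that is what makes the result useful in applications to weighted Morrey spaces later in the paper.
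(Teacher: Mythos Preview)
Your proposal is correct and matches the paper's approach exactly: the paper derives Theorem~B from Theorem~$\bar{\mathrm{B}}$ in a single sentence by invoking the $A_1$ condition $Mw(x)\le C\cdot w(x)$ a.e., precisely as you do. Your added remark about the dependence of the constant on the $A_1$ constant of $w$ is a reasonable observation, though the paper does not make it explicit.
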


In particular, if we take $w$ to be a constant function, then we immediately
get the following

\newtheorem*{thmc}{Theorem C}

\begin{thmc}
Let $0<\alpha\le1$ and $1<p<\infty$. Then there exists a constant $C>0$ independent of $\vec{f}=(f_1,f_2,\ldots)$ such that
\begin{equation*}
\bigg\|\bigg(\sum_{j}\big|\mathcal S_\alpha(f_j)\big|^2\bigg)^{1/2}\bigg\|_{L^p}
\leq C \bigg\|\bigg(\sum_{j}\big|f_j\big|^2\bigg)^{1/2}\bigg\|_{L^p}.
\end{equation*}
\end{thmc}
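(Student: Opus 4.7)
The plan is to observe that Theorem C is essentially the unweighted specialization of Theorem A, and so the proof reduces to checking that the constant weight lies in the Muckenhoupt class. Take $w(x)\equiv 1$. Then for any cube $Q\subset\mathbb R^n$ and any $1<p<\infty$,
\begin{equation*}
\Bigl(\frac{1}{|Q|}\int_Q w(x)\,dx\Bigr)\Bigl(\frac{1}{|Q|}\int_Q w(x)^{-1/(p-1)}\,dx\Bigr)^{p-1}=1,
\end{equation*}
so that $w\in A_p$ with $A_p$-constant equal to $1$. Moreover $L^p_w(\mathbb R^n)=L^p(\mathbb R^n)$ with identical norms for this choice of $w$.

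Thus, applying Theorem A to $w\equiv 1$, I would obtain a constant $C>0$, independent of $\vec f=(f_1,f_2,\ldots)$, such that
\begin{equation*}
\bigg\|\bigg(\sum_{j}\big|\mathcal S_\alpha(f_j)\big|^2\bigg)^{1/2}\bigg\|_{L^p}
=\bigg\|\bigg(\sum_{j}\big|\mathcal S_\alpha(f_j)\big|^2\bigg)^{1/2}\bigg\|_{L^p_w}
\leq C \bigg\|\bigg(\sum_{j}\big|f_j\big|^2\bigg)^{1/2}\bigg\|_{L^p_w}
=C\bigg\|\bigg(\sum_{j}\big|f_j\big|^2\bigg)^{1/2}\bigg\|_{L^p},
\end{equation*}
which is precisely the conclusion of Theorem C.

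Since the reduction is a one-line specialization, there is no genuine obstacle; the only thing to verify is the trivial membership $1\in A_p$ and the identity $L^p_w=L^p$ when $w$ is constant. If one wished to give an independent direct proof that did not quote Theorem A, the natural alternative would be to combine Wilson's pointwise domination of $\mathcal S_\alpha(f)$ by a Lusin-type area integral with the Fefferman--Stein vector-valued maximal inequality; the main technical point in that approach would be transferring the scalar $L^p$ bound for $\mathcal S_\alpha$ to the $\ell^2$-valued setting via duality against $L^{p'}(\ell^2)$, but this is avoided entirely by appealing directly to the already-proved Theorem A.
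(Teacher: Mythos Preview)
Your proposal is correct and matches the paper's own reasoning exactly: the paper introduces Theorem~C with the sentence ``In particular, if we take $w$ to be a constant function, then we immediately get the following,'' so the intended proof is precisely the specialization $w\equiv 1\in A_p$ in Theorem~A that you carry out. (A trivial cosmetic point: the paper phrases the $A_p$ condition over balls rather than cubes, but for $w\equiv 1$ both averages equal $1$ regardless.)
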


\newtheorem*{thmd}{Theorem D}

\begin{thmd}
Let $0<\alpha\le1$ and $p=1$. Then there exists a constant $C>0$ independent of $\vec{f}=(f_1,f_2,\ldots)$ such that
\begin{equation*}
\bigg\|\bigg(\sum_{j}\big|\mathcal S_{\alpha}(f_j)\big|^2\bigg)^{1/2}\bigg\|_{WL^1}
\leq C \bigg\|\bigg(\sum_{j}\big|f_j\big|^2\bigg)^{1/2}\bigg\|_{L^1}.
\end{equation*}
\end{thmd}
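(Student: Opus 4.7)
The plan is to obtain Theorem D as an immediate specialization of Theorem $\bar{\mathrm{B}}$ (equivalently Theorem B) to the constant weight $w\equiv 1$, in exact parallel to the derivation of Theorem C from Theorem A sketched just above in the excerpt.

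First, I would note that $w\equiv 1$ is a legitimate weight. Since the average of the constant $1$ over any ball equals $1$, the Hardy--Littlewood maximal function satisfies $Mw(x)=1$ for every $x\in\mathbb{R}^n$. Moreover, the weighted measure $w(E)=\int_E w(x)\,dx$ reduces to Lebesgue measure $|E|$, so the weighted $L^1$ and weak $L^1$ norms on both sides coincide with their unweighted counterparts.

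Next, I would substitute this choice of $w$ into the inequality supplied by Theorem $\bar{\mathrm{B}}$. The left-hand side becomes
\[
\Big|\Big\{x\in\mathbb{R}^n:\Big(\textstyle\sum_j|\mathcal{S}_\alpha(f_j)(x)|^2\Big)^{1/2}>\lambda\Big\}\Big|,
\]
while on the right-hand side the factor $Mw(x)$ is identically $1$, leaving $\frac{C}{\lambda}\int_{\mathbb{R}^n}\big(\sum_j|f_j(x)|^2\big)^{1/2}\,dx$. Taking the supremum over $\lambda>0$ of $\lambda$ times the measure of the level set produces precisely the weak $L^1$ norm bound asserted in Theorem D.

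I do not foresee any real obstacle here: all the analytic content has already been absorbed into Theorem $\bar{\mathrm{B}}$, which is the genuinely hard result proved by Wilson in \cite{wilson2}. Alternatively, one could invoke Theorem B directly, since the constant weight $w\equiv 1$ belongs to the Muckenhoupt class $A_1$; either route reduces the argument to a one-line substitution of weights.
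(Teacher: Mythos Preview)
Your proposal is correct and matches the paper's own approach: the paper states that Theorems C and D follow immediately from Theorems A and B (equivalently Theorem~$\bar{\mathrm{B}}$) by taking $w$ to be a constant function, and your argument spells out exactly this specialization.
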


On the other hand, the classical Morrey spaces $\mathcal L^{p,\lambda}$ were originally introduced by Morrey in \cite{morrey} to study the local behavior of solutions to second order elliptic partial differential equations. Since then, these spaces play an important role in studying the regularity of solutions to partial differential equations. For the boundedness of the Hardy--Littlewood maximal operator, the fractional integral operator and the Calder\'on--Zygmund singular integral operator on these spaces, we refer the reader to \cite{adams,chiarenza,peetre}. In \cite{mizuhara}, Mizuhara introduced the generalized Morrey spaces $L^{p,\Phi}$ which was later extended and studied by many authors (see \cite{guliyev1,guliyev2,guliyev3,lu,nakai}). In \cite{komori}, Komori and Shirai defined the weighted Morrey spaces $L^{p,\kappa}(w)$ which could be
viewed as an extension of weighted Lebesgue spaces, and then discussed the boundedness of the above classical operators in Harmonic Analysis on these weighted spaces. Recently, in \cite{wang1,wang2,wang3}, we have established the strong type and weak type estimates for intrinsic square functions on $L^{p,\Phi}$ and $L^{p,\kappa}(w)$.

For the boundedness of vector-valued intrinsic square functions in the weighted Morrey spaces $L^{p,\kappa}(w)$ for all $1\leq p<\infty$ and $0<\kappa<1$, we will prove

\newtheorem{theorem}{Theorem}[section]

\begin{theorem}\label{mainthm:1}
Let $0<\alpha\le1$, $1<p<\infty$, $0<\kappa<1$ and $w\in A_p$. Then there is a constant $C>0$ independent of $\vec{f}=(f_1,f_2,\ldots)$ such that
\begin{equation*}
\bigg\|\bigg(\sum_{j}\big|\mathcal S_\alpha(f_j)\big|^2\bigg)^{1/2}\bigg\|_{L^{p,\kappa}(w)}
\leq C \bigg\|\bigg(\sum_{j}\big|f_j\big|^2\bigg)^{1/2}\bigg\|_{L^{p,\kappa}(w)}.
\end{equation*}
\end{theorem}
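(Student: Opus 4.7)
The plan is to follow the standard Morrey-space splitting. Fix a ball $B=B(x_0,r)$ and decompose $f_j=f_j^0+f_j^\infty$ with $f_j^0=f_j\chi_{2B}$ and $f_j^\infty=f_j\chi_{(2B)^c}$; by sublinearity and the triangle inequality in $\ell^2$ it suffices to prove, for $\#\in\{0,\infty\}$,
\[
\bigg(\frac{1}{w(B)^\kappa}\int_B\Big(\sum_j|\mathcal S_\alpha(f_j^{\#})|^2\Big)^{p/2}w(x)\,dx\bigg)^{1/p}\le C\bigg\|\Big(\sum_j|f_j|^2\Big)^{1/2}\bigg\|_{L^{p,\kappa}(w)}.
\]
For the local piece, applying Theorem A on all of $\mathbb R^n$ yields
\[
\int_B\Big(\sum_j|\mathcal S_\alpha(f_j^0)|^2\Big)^{p/2}w\,dx\le C\int_{2B}\Big(\sum_j|f_j|^2\Big)^{p/2}w\,dx\le Cw(2B)^\kappa\bigg\|\Big(\sum_j|f_j|^2\Big)^{1/2}\bigg\|_{L^{p,\kappa}(w)}^p,
\]
and the doubling property $w(2B)\le Cw(B)$ of $A_p$ weights closes this term.

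For the global piece I would first establish the pointwise estimate
\[
\mathcal S_\alpha(f_j^\infty)(x)\le C\sum_{k=1}^\infty\frac{1}{|2^{k+1}B|}\int_{2^{k+1}B}|f_j(z)|\,dz\qquad(x\in B)
\]
by annular decomposition $(2B)^c=\bigcup_{k\ge1}(2^{k+1}B\setminus 2^kB)$. For $x\in B$, $(y,t)\in\Gamma(x)$, $\varphi\in\mathcal C_\alpha$ and $z\in 2^{k+1}B\setminus 2^kB$ with $|y-z|\le t$, the cone condition $|x-y|<t$ forces $t\gtrsim 2^kr$; combining the crude size bound $|\varphi_t|\le t^{-n}$ with explicit integration of $\int_{c2^kr}^\infty t^{-(2n+1)}\,dt$ produces the $k$-th annular contribution, and summing in $k$ yields the displayed pointwise estimate. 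Passing to $\ell^2$ via Minkowski's integral inequality replaces $|f_j|$ by $(\sum_j|f_j|^2)^{1/2}$ on the right-hand side.

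To conclude, I would combine Hölder's inequality with the $A_p$ condition to obtain, for each $Q=2^{k+1}B$,
\[
\frac{1}{|Q|}\int_Q\Big(\sum_j|f_j|^2\Big)^{1/2}dz\le\frac{C}{w(Q)^{1/p}}\bigg(\int_Q\Big(\sum_j|f_j|^2\Big)^{p/2}w\,dz\bigg)^{1/p}\le\frac{C}{w(Q)^{(1-\kappa)/p}}\bigg\|\Big(\sum_j|f_j|^2\Big)^{1/2}\bigg\|_{L^{p,\kappa}(w)}.
\]
The $A_\infty$ reverse-doubling inequality $w(B)/w(2^{k+1}B)\le C\cdot 2^{-kn\delta}$ for some $\delta>0$ then turns $\sum_k w(2^{k+1}B)^{-(1-\kappa)/p}$ into a convergent geometric series majorized by $Cw(B)^{-(1-\kappa)/p}$, convergence holding precisely because $\kappa<1$. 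Raising to the $p$-th power, integrating over $B$ against $w$ and dividing by $w(B)^\kappa$ cancels the factors $w(B)^{1-\kappa}$ and $w(B)^{-(1-\kappa)}$, giving the required uniform bound.

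The main obstacle is the non-local pointwise estimate: the kernel $\varphi_t$ is only $\alpha$-Hölder continuous and carries no extra smoothness that could produce decay in the index $k$ beyond the averaging factor $|2^{k+1}B|^{-1}$. Consequently, all geometric decay must be extracted from the $A_\infty$ property of $w$, which is also why the argument is restricted to $\kappa<1$ and breaks down at the endpoint $\kappa=1$.
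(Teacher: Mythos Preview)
Your proposal is correct and follows essentially the same route as the paper: the same local/global splitting over $2B$, Theorem~A plus $A_p$-doubling for the local part, the same annular pointwise bound $\mathcal S_\alpha(f_j^\infty)(x)\le C\sum_k|2^{k+1}B|^{-1}\int_{2^{k+1}B}|f_j|$ for $x\in B$, then H\"older with the $A_p$ condition and the $A_\infty$ comparison $(w(B)/w(2^{k+1}B))\le C(|B|/|2^{k+1}B|)^\delta$ to sum the geometric series. The only cosmetic difference is that you pass to the $\ell^2$-norm via Minkowski's integral inequality, whereas the paper writes the same step as a duality/Cauchy--Schwarz argument; the two are equivalent.
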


\begin{theorem}\label{mainthm:2}
Let $0<\alpha\le1$, $p=1$, $0<\kappa<1$ and $w\in A_1$. Then there is a constant $C>0$ independent of $\vec{f}=(f_1,f_2,\ldots)$ such that
\begin{equation*}
\bigg\|\bigg(\sum_{j}\big|\mathcal S_{\alpha}(f_j)\big|^2\bigg)^{1/2}\bigg\|_{WL^{1,\kappa}(w)}
\leq C \bigg\|\bigg(\sum_{j}\big|f_j\big|^2\bigg)^{1/2}\bigg\|_{L^{1,\kappa}(w)}.
\end{equation*}
\end{theorem}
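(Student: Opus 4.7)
My plan is to mimic the standard endpoint argument for singular operators on weighted Morrey spaces, reducing to the already-known weak estimate of Theorem~B together with a far-field pointwise decay estimate. Fix a ball $B=B(x_0,r)$ and $\lambda>0$. For each component write $f_j=f_j^1+f_j^2$ where $f_j^1=f_j\chi_{2B}$, $f_j^2=f_j\chi_{(2B)^c}$. Since $\mathcal S_\alpha$ is sublinear, Minkowski's inequality in $\ell^{2}$ gives
\[
\mathcal S_\alpha(\vec f)(x)\le \mathcal S_\alpha(\vec f^{\,1})(x)+\mathcal S_\alpha(\vec f^{\,2})(x).
\]
Therefore $w\big(\{x\in B:\mathcal S_\alpha(\vec f)(x)>\lambda\}\big)\le I_1+I_2$ with an obvious splitting at level $\lambda/2$, and it suffices to check that $\lambda\cdot I_i/w(B)^{\kappa}\le C\|\vec f\|_{L^{1,\kappa}(w)}$ for $i=1,2$.

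For the local part $I_1$, I would apply Theorem~B directly to $\vec f^{\,1}$: since $w\in A_1$,
\[
I_1\le \frac{C}{\lambda}\int_{\mathbb R^{n}}\bigg(\sum_j|f_j^1|^2\bigg)^{1/2}w\,dx
=\frac{C}{\lambda}\int_{2B}\bigg(\sum_j|f_j|^2\bigg)^{1/2}w\,dx
\le \frac{C}{\lambda}\,\|\vec f\|_{L^{1,\kappa}(w)}\,w(2B)^{\kappa},
\]
and the $A_1$ doubling of $w$ absorbs $w(2B)^{\kappa}$ into $w(B)^{\kappa}$ up to a constant.

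For the global part $I_2$, I would start from the pointwise bound established in the scalar Morrey-space work \cite{wang1,wang2,wang3}: for $x\in B$ and $g$ supported in $(2B)^c$,
\[
\mathcal S_\alpha(g)(x)\le C\sum_{k=1}^{\infty}2^{-k\alpha}\cdot\frac{1}{|2^{k+1}B|}\int_{2^{k+1}B}|g(z)|\,dz.
\]
Applying this to each $f_j^{\,2}$ and using Minkowski in $\ell^{2}$ componentwise yields
\[
\mathcal S_\alpha(\vec f^{\,2})(x)\le C\sum_{k=1}^{\infty}2^{-k\alpha}\cdot\frac{1}{|2^{k+1}B|}\int_{2^{k+1}B}\bigg(\sum_j|f_j(z)|^2\bigg)^{1/2}dz.
\]
Write $F(z):=(\sum_j|f_j(z)|^2)^{1/2}$. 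Since the right-hand side is a constant on $B$, Chebyshev's inequality gives
\[
I_2\le \frac{2}{\lambda}\,w(B)\cdot C\sum_{k=1}^{\infty}2^{-k\alpha}\frac{1}{|2^{k+1}B|}\int_{2^{k+1}B}F(z)\,dz.
\]
Using $w\in A_1$ to replace Lebesgue integration by $w$-integration yields $\int_{2^{k+1}B}F\le C\,|2^{k+1}B|\,w(2^{k+1}B)^{-1}\int_{2^{k+1}B}Fw$, and then the Morrey norm bounds $\int_{2^{k+1}B}Fw\le\|F\|_{L^{1,\kappa}(w)}\,w(2^{k+1}B)^{\kappa}$.

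Putting these together, the estimate for $I_2$ reduces to
\[
\frac{\lambda}{w(B)^{\kappa}}I_2\le C\,\|F\|_{L^{1,\kappa}(w)}\sum_{k=1}^{\infty}2^{-k\alpha}\bigg(\frac{w(B)}{w(2^{k+1}B)}\bigg)^{1-\kappa}.
\]
The main technical point is the convergence of this series. I would invoke the $A_\infty$ reverse-doubling property (valid since $A_1\subset A_\infty$): there exists $\delta>0$ with $w(B)/w(2^{k+1}B)\le C\,2^{-(k+1)n\delta}$. Because $0<\kappa<1$, the resulting geometric factor $2^{-(k+1)n\delta(1-\kappa)}$ combines with $2^{-k\alpha}$ to produce a convergent series, finishing the estimate. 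The only real obstacle is verifying the pointwise decay bound in a way compatible with the intrinsic $\sup_{\varphi\in\mathcal C_\alpha}$, but this was already carried out in the author's earlier papers \cite{wang1,wang2,wang3} and is inherited componentwise here via the $\ell^{2}$ Minkowski step.
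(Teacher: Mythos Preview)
Your approach is essentially the paper's own: the same near/far decomposition, Theorem~B for the local piece, and for the far piece the pointwise annular bound followed by the $A_1$ condition and $A_\infty$ reverse doubling to sum the series (the paper phrases the last step as an empty/nonempty dichotomy rather than Chebyshev, and passes from the scalar bound to the $\ell^2$ bound via duality rather than Minkowski, but these are cosmetic). One small correction: the far-field estimate actually derived in the paper (its inequality~(3.4)) carries no $2^{-k\alpha}$ factor---the H\"older regularity of $\varphi$ plays no role there, only the support and size of $\varphi_t$ do---so you should not invoke it; fortunately your argument does not need it, since the $A_\infty$ decay $(w(B)/w(2^{k+1}B))^{1-\kappa}\le C\,2^{-(k+1)n\delta(1-\kappa)}$ already gives a convergent series on its own.
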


For the continuity properties of $\mathcal S_\alpha(\vec{f})$ in $L^{p,\Phi}$ for all $1\leq p<\infty$, we will show that

\begin{theorem}\label{mainthm:3}
Let $0<\alpha\le1$ and $1<p<\infty$. Assume that $\Phi$ satisfies $(\ref{doubling})$ and $1\le D(\Phi)<2^n$, then there is a constant $C>0$ independent of $\vec{f}=(f_1,f_2,\ldots)$ such that
\begin{equation*}
\bigg\|\bigg(\sum_{j}\big|\mathcal S_\alpha(f_j)\big|^2\bigg)^{1/2}\bigg\|_{L^{p,\Phi}}
\leq C\bigg\|\bigg(\sum_{j}\big|f_j\big|^2\bigg)^{1/2}\bigg\|_{L^{p,\Phi}}.
\end{equation*}
\end{theorem}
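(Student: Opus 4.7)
The plan is to follow the standard Morrey-space pattern of splitting the input on a ball. Fix an arbitrary ball $B=B(x_0,r)\subset\mathbb{R}^n$ and write $f_j=f_j^{0}+f_j^{\infty}$, where $f_j^{0}=f_j\chi_{2B}$ and $f_j^{\infty}=f_j\chi_{(2B)^c}$. Because $\mathcal{S}_\alpha$ is sublinear and the $\ell^2$ norm satisfies Minkowski's inequality,
\[
\Big(\sum_j|\mathcal{S}_\alpha(f_j)|^2\Big)^{1/2}\le \Big(\sum_j|\mathcal{S}_\alpha(f_j^{0})|^2\Big)^{1/2}+\Big(\sum_j|\mathcal{S}_\alpha(f_j^{\infty})|^2\Big)^{1/2},
\]
so after dividing by $\Phi(|B|)^{1/p}$ and taking the supremum over $B$, it suffices to estimate the local and global pieces separately.

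For the local piece I would invoke Theorem~C directly, which gives
\[
\bigg\|\Big(\sum_j|\mathcal{S}_\alpha(f_j^{0})|^2\Big)^{1/2}\bigg\|_{L^p(B)}\le C\bigg\|\Big(\sum_j|f_j|^2\Big)^{1/2}\bigg\|_{L^p(2B)}\le C\,\Phi(|2B|)^{1/p}\bigg\|\Big(\sum_j|f_j|^2\Big)^{1/2}\bigg\|_{L^{p,\Phi}},
\]
and then absorb $\Phi(|2B|)$ into $\Phi(|B|)$ via the doubling condition on $\Phi$. For the global piece the starting point is the pointwise estimate
\[
\big|\mathcal{S}_\alpha(f_j^{\infty})(x)\big|\le C\sum_{k=1}^{\infty}2^{-k\alpha}\,\frac{1}{|2^{k+1}B|}\int_{2^{k+1}B}|f_j(y)|\,dy\qquad(x\in B),
\]
which the author has established in the scalar setting in \cite{wang1,wang3}; it is obtained by decomposing $(2B)^c$ into dyadic annuli $2^{k+1}B\setminus 2^kB$, observing that the support of $\varphi_t$ forces $t\gtrsim 2^kr$ on each annulus, and invoking the vanishing-moment and $\alpha$-Hölder properties of functions in $\mathcal C_\alpha$. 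Applying Minkowski's inequality in $\ell^2$ and then Hölder's inequality on each annulus yields, for $x\in B$,
\[
\Big(\sum_j|\mathcal{S}_\alpha(f_j^{\infty})(x)|^2\Big)^{1/2}\le C\sum_{k=1}^{\infty}\frac{2^{-k\alpha}}{|2^{k+1}B|^{1/p}}\bigg(\int_{2^{k+1}B}\Big(\sum_j|f_j|^2\Big)^{p/2}dy\bigg)^{1/p},
\]
and inserting the $L^{p,\Phi}$-bound on $(\sum_j|f_j|^2)^{1/2}$ over each $2^{k+1}B$ then integrating over $B$ produces
\[
\bigg\|\Big(\sum_j|\mathcal{S}_\alpha(f_j^{\infty})|^2\Big)^{1/2}\bigg\|_{L^p(B)}\le C|B|^{1/p}\bigg\|\Big(\sum_j|f_j|^2\Big)^{1/2}\bigg\|_{L^{p,\Phi}}\sum_{k=1}^\infty 2^{-k\alpha}\Big(\frac{\Phi(|2^{k+1}B|)}{|2^{k+1}B|}\Big)^{1/p}.
\]
Since $|B|/|2^{k+1}B|=2^{-(k+1)n}$ and iterated doubling yields $\Phi(|2^{k+1}B|)\le D(\Phi)^{k+1}\Phi(|B|)$, the tail series is dominated by $\Phi(|B|)^{1/p}\sum_{k\ge 1}2^{-k\alpha}\bigl(D(\Phi)/2^n\bigr)^{k/p}$, which converges precisely because $D(\Phi)<2^n$.

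The main obstacle is the pointwise decay estimate for $\mathcal{S}_\alpha(f_j^{\infty})$: one must carefully exploit the vanishing moment and $\alpha$-Hölder regularity of every $\varphi\in\mathcal C_\alpha$, uniformly in $\varphi$, and track the constraint on $t$ imposed by the cone and by the support of $\varphi_t$, in order to extract the $2^{-k\alpha}$ factor on each annulus. Once this bound is in place, the remainder of the proof is bookkeeping, and the hypothesis $D(\Phi)<2^n$ enters exactly to ensure the geometric tail converges.
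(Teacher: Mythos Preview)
Your overall strategy---split each $f_j$ into a local piece on $2B$ and a tail, handle the local piece by Theorem~C plus doubling of $\Phi$, and control the tail by a pointwise annular estimate followed by H\"older and a convergent geometric series---is exactly the route the paper takes. The passage from the scalar pointwise bound to the $\ell^2$ bound (you use Minkowski in $\ell^2$; the paper uses duality and Cauchy--Schwarz) is also equivalent.

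There is, however, one overclaim you should correct. The pointwise estimate you assert carries an extra factor $2^{-k\alpha}$, which you attribute to the vanishing moment and $\alpha$-H\"older regularity of $\varphi\in\mathcal C_\alpha$. Neither of these properties produces such a gain here: there is no smoothness of $f_j^\infty$ against which the cancellation $\int\varphi=0$ can act, and the H\"older continuity of $\varphi$ does not translate into decay across the annuli. The paper obtains (and only needs)
\[
\big|\mathcal S_\alpha(f_j^\infty)(x)\big|\le C\sum_{k\ge1}\frac{1}{|2^{k+1}B|}\int_{2^{k+1}B}|f_j(z)|\,dz\qquad(x\in B),
\]
using nothing more than the uniform bound $|\varphi|\le C$, the support condition $\mathrm{supp}\,\varphi\subset\{|x|\le1\}$, and the cone geometry, which forces $t\gtrsim 2^{k}r$ on the $k$-th annulus so that $\int_{2^{k-2}r}^\infty t^{-2n-1}\,dt\sim|2^{k+1}B|^{-2}$. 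Fortunately the missing $2^{-k\alpha}$ is harmless: the tail $\sum_k(D(\Phi)/2^n)^{(k+1)/p}$ already converges under the hypothesis $D(\Phi)<2^n$, so your argument closes once you drop that factor. As a minor notational point, in this paper $\Phi$ is a function of the radius $r$, not of $|B|$; write $\Phi(r)$ and $\Phi(2^{k+1}r)$ rather than $\Phi(|B|)$ and $\Phi(|2^{k+1}B|)$.
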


\begin{theorem}\label{mainthm:4}
Let $0<\alpha\le1$ and $p=1$. Assume that $\Phi$ satisfies $(\ref{doubling})$ and $1\le D(\Phi)<2^n$, then there is a constant $C>0$ independent of $\vec{f}=(f_1,f_2,\ldots)$ such that
\begin{equation*}
\bigg\|\bigg(\sum_{j}\big|\mathcal S_\alpha(f_j)\big|^2\bigg)^{1/2}\bigg\|_{WL^{1,\Phi}}
\leq C\bigg\|\bigg(\sum_{j}\big|f_j\big|^2\bigg)^{1/2}\bigg\|_{L^{1,\Phi}}.
\end{equation*}
\end{theorem}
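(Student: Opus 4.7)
The plan mirrors the template used for the weighted endpoint estimate (Theorem~\ref{mainthm:2}) and the unweighted strong estimate (Theorem~\ref{mainthm:3}): fix an arbitrary ball $B=B(x_0,r)$ and level $\lambda>0$, split $\vec f$ into a piece supported in $2B$ and a remainder, handle the local piece with Theorem~D, and produce a uniform pointwise bound on the non-local piece that matches the $L^{1,\Phi}$ norm on the right.

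With $\vec f_1=\vec f\,\chi_{2B}$ and $\vec f_2=\vec f\,\chi_{(2B)^c}$, Minkowski in $\ell^2$ gives
\[
\bigg(\sum_j|\mathcal S_\alpha(f_j)|^2\bigg)^{1/2} \le \bigg(\sum_j|\mathcal S_\alpha(f_{1,j})|^2\bigg)^{1/2} + \bigg(\sum_j|\mathcal S_\alpha(f_{2,j})|^2\bigg)^{1/2},
\]
so it suffices to control $\lambda\,|E_i|/\Phi(|B|)$, where $E_i=\{x\in B:(\sum_j|\mathcal S_\alpha(f_{i,j})(x)|^2)^{1/2}>\lambda/2\}$, and then take suprema over $B$ and $\lambda$. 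Theorem~D applied to $\vec f_1$, combined with the defining inequality of $L^{1,\Phi}$ on $2B$ and the doubling bound $\Phi(|2B|)\le D(\Phi)\,\Phi(|B|)$, at once gives $\lambda\,|E_1|\le C\,\Phi(|B|)\,\|\vec f\|_{L^{1,\Phi}}$, already of the desired form.

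The non-local term requires a pointwise estimate, and this is where the work lies. Passing the $\ell^2$ sum inside (choosing one $\varphi_j\in\mathcal C_\alpha$ per coordinate, then using Minkowski) together with the trivial size bound $A_\alpha(f_{2,j})(y,t)\le t^{-n}\int_{B(y,t)\cap(2B)^c}|f_j|\,dz$ yields
\[
\bigg(\sum_j|\mathcal S_\alpha(f_{2,j})(x)|^2\bigg)^{1/2}\le \bigg(\iint_{\Gamma(x)}t^{-2n}\Big(\int_{B(y,t)\cap(2B)^c}F(z)\,dz\Big)^2\frac{dy\,dt}{t^{n+1}}\bigg)^{1/2},
\]
where $F=(\sum_j|f_j|^2)^{1/2}$. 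A short cone-geometry check shows that the $z$-integral vanishes unless $t\ge r/2$, and that $B(y,t)\subset B(x_0,2t+r)$ for $y\in B(x,t)$. Integrating in $y$, splitting the $t$-integral dyadically over $[2^{k-1}r,2^k r)$ for $k\ge 0$, bounding each annular average of $F$ by $\|\vec f\|_{L^{1,\Phi}}\,\Phi(|2^{k+2}B|)/|2^{k+2}B|$, and using $\Phi(|2^j B|)\le D(\Phi)^j\Phi(|B|)$ together with $|2^j B|=2^{jn}|B|$ reduce the estimate to a geometric series with ratio $(D(\Phi)/2^n)^2$. The convergence of this series is precisely where $D(\Phi)<2^n$ enters, and it is the single delicate step of the argument. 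The outcome is a uniform pointwise bound $(\sum_j|\mathcal S_\alpha(f_{2,j})(x)|^2)^{1/2}\le K$ on $B$ with $K=C\,\|\vec f\|_{L^{1,\Phi}}\,\Phi(|B|)/|B|$, from which $\lambda\,|E_2|\le 2K\,|B|=C\,\Phi(|B|)\,\|\vec f\|_{L^{1,\Phi}}$ follows by treating $\lambda/2\le K$ and $\lambda/2>K$ separately. Adding the two contributions, dividing by $\Phi(|B|)$, and taking the supremum over $B$ and $\lambda$ gives the theorem.
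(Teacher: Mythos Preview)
Your proposal is correct and follows essentially the same template as the paper's proof: fix a ball, split into local and far pieces, control the local piece by Theorem~D plus doubling, and obtain a pointwise bound on the far piece that sums a geometric series governed by $D(\Phi)/2^n$, then dispose of $E_2$ by the two-case argument on $\lambda$. The only differences are cosmetic: the paper reaches the pointwise bound by quoting its key estimate~(\ref{key estimate}) (duality/Cauchy--Schwarz to pass the $\ell^2$ norm inside, then an annular decomposition in the space variable), whereas you use Minkowski's integral inequality and a dyadic splitting in $t$, yielding ratio $(D(\Phi)/2^n)^2$ instead of $D(\Phi)/2^n$; both converge under the same hypothesis. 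One notational slip: in this paper $\Phi$ is a function of the radius $r$, not of the measure $|B|$, so your $\Phi(|B|)$, $\Phi(|2B|)$ should read $\Phi(r)$, $\Phi(2r)$.
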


\section{Notations and definitions}

\newtheorem{defn}[theorem]{Definition}

\subsection{Generalized Morrey spaces}
Let $\Phi=\Phi(r)$, $r>0$, be a growth function, that is, a positive increasing function in $(0,\infty)$ and satisfy the following doubling condition.
\begin{equation}\label{doubling}
\Phi(2r)\le D\cdot\Phi(r), \quad \mbox{for all }\,r>0,
\end{equation}
where $D=D(\Phi)\ge1$ is a doubling constant independent of $r$.

\begin{defn}[\cite{mizuhara}]
Let $1\le p<\infty$. We denote by $L^{p,\Phi}=L^{p,\Phi}(\mathbb R^n)$ the space of all locally integrable functions $f$ defined on $\mathbb R^n$, such that for every $x_0\in\mathbb R^n$ and all $r>0$
\begin{equation}\label{generalized morrey}
\int_{B(x_0,r)}|f(x)|^p\,dx\le C^p\Phi(r),
\end{equation}
where $B(x_0,r)=\{x\in\mathbb R^n:|x-x_0|<r\}$ is the ball centered at $x_0$ and with radius $r>0$. Then we let $\|f\|_{L^{p,\Phi}}$ be the smallest constant $C>0$ satisfying (\ref{generalized morrey}) and $L^{p,\Phi}(\mathbb R^n)$ becomes a Banach space with norm $\|\cdot\|_{L^{p,\Phi}}$.
\end{defn}

Obviously, when $\Phi(r)=r^{\lambda}$ with $0<\lambda<n$, $L^{p,\Phi}$ is just the classical Morrey spaces introduced in \cite{morrey}. We also denote by $WL^{1,\Phi}=WL^{1,\Phi}(\mathbb R^n)$ the generalized weak Morrey spaces of all measurable functions $f$ for which
\begin{equation}\label{general weak morrey}
\sup_{\lambda>0}\lambda\cdot\big|\big\{x\in B(x_0,r):|f(x)|>\lambda\big\}\big|\le C\Phi(r),
\end{equation}
for every $x_0\in\mathbb R^n$ and all $r>0$. The smallest constant $C>0$ satisfying (\ref{general weak morrey}) is also denoted by $\|f\|_{WL^{1,\Phi}}$.

\subsection{Weighted Morrey spaces}

A weight $w$ is a nonnegative, locally integrable function on $\mathbb R^n$, $B=B(x_0,r_B)$ denotes the ball with the center $x_0$ and radius $r_B$. For $1<p<\infty$, a weight function $w$ is said to belong to $A_p$, if there is a constant $C>0$ such that for every ball $B\subseteq \mathbb R^n$,
\begin{equation*}
\left(\frac1{|B|}\int_B w(x)\,dx\right)\left(\frac1{|B|}\int_B w(x)^{-1/{(p-1)}}\,dx\right)^{p-1}\le C.
\end{equation*}
For the case $p=1$, $w\in A_1$, if there is a constant $C>0$ such that for every ball $B\subseteq \mathbb R^n$,
\begin{equation*}
\frac1{|B|}\int_B w(x)\,dx\le C\cdot\underset{x\in B}{\mbox{ess\,inf}}\;w(x).
\end{equation*}
A weight function $w\in A_\infty$ if it satisfies the $A_p$ condition for some $1\leq p<\infty$. It is well known that if $w\in A_p$ with $1\leq p<\infty$, then for any ball $B$, there exists an absolute constant $C>0$ such that
\begin{equation}\label{weights}
w(2B)\le C\,w(B).
\end{equation}
Moreover, if $w\in A_\infty$, then for all balls $B$ and all measurable subsets $E$ of $B$, there exists $\delta>0$ such that
\begin{equation}\label{compare}
\frac{w(E)}{w(B)}\le C\left(\frac{|E|}{|B|}\right)^\delta.
\end{equation}
Given a ball $B$ and $\lambda>0$, $\lambda B$ denotes the ball with the same center as $B$ whose radius is $\lambda$ times that of $B$. For a given weight function $w$ and a measurable set $E$, we also denote the Lebesgue measure of $E$ by $|E|$ and the weighted measure of $E$ by $w(E)$, where $w(E)=\int_E w(x)\,dx$.

Given a weight function $w$ on $\mathbb R^n$, for $1\leq p<\infty$, the weighted Lebesgue space $L^p_w(\mathbb R^n)$ defined as the set of all functions $f$ such that
\begin{equation}
\big\|f\big\|_{L^p_w}=\bigg(\int_{\mathbb R^n}|f(x)|^pw(x)\,dx\bigg)^{1/p}<\infty.
\end{equation}
We also denote by $WL^1_w(\mathbb R^n)$ the weighted weak space consisting of all measurable functions $f$ such that
\begin{equation}
\big\|f\big\|_{WL^1_w}=
\sup_{\lambda>0}\lambda\cdot w\big(\big\{x\in\mathbb R^n:|f(x)|>\lambda \big\}\big)<\infty.
\end{equation}

In particular, for $w$ equals to a constant function, we shall denote $L^p_w(\mathbb R^n)$ and $WL^1_w(\mathbb R^n)$ simply by $L^p(\mathbb R^n)$ and $WL^1(\mathbb R^n)$.

\begin{defn}[\cite{komori}]
Let $1\leq p<\infty$, $0<\kappa<1$ and $w$ be a weight function on $\mathbb R^n$. Then the weighted Morrey space is defined by
\begin{equation*}
L^{p,\kappa}(w)=\big\{f\in L^p_{loc}(w):\big\|f\big\|_{L^{p,\kappa}(w)}<\infty\big\},
\end{equation*}
where
\begin{equation}
\big\|f\big\|_{L^{p,\kappa}(w)}=\sup_B\left(\frac{1}{w(B)^{\kappa}}\int_B|f(x)|^pw(x)\,dx\right)^{1/p}
\end{equation}
and the supremum is taken over all balls $B$ in $\mathbb R^n$.
\end{defn}

For $p=1$ and $0<\kappa<1$, we also denote by $WL^{1,\kappa}(w)$ the weighted weak Morrey spaces of all measurable functions $f$ satisfying
\begin{equation}
\big\|f\big\|_{WL^{1,\kappa}(w)}=\sup_B\sup_{\lambda>0}\frac{1}{w(B)^{\kappa}}\lambda\cdot w\big(\big\{x\in B:|f(x)|>\lambda\big\}\big)<\infty.
\end{equation}

Throughout this paper, the letter $C$ always denote a positive constant independent of the main parameters involved, but it may be different from line to line.

\section{Proofs of Theorems \ref{mainthm:1} and \ref{mainthm:2}}

\begin{proof}[Proof of Theorem $\ref{mainthm:1}$]
Let $\Big(\sum_{j}\big|f_j\big|^2\Big)^{1/2}\in L^{p,\kappa}(w)$ with $1<p<\infty$ and $0<\kappa<1$. Fix a ball $B=B(x_0,r_B)\subseteq\mathbb R^n$ and decompose $f_j=f^0_j+f^\infty_j$, where $f^0_j=f_j\cdot\chi_{_{2B}}$ and $\chi_{_{2B}}$ denotes the characteristic function of $2B=B(x_0,2r_B)$, $j=1,2,\ldots$. Then we write
\begin{equation*}
\begin{split}
&\frac{1}{w(B)^{\kappa/p}}\Bigg(\int_B\bigg(\sum_{j}\big|\mathcal S_\alpha(f_j)(x)\big|^2\bigg)^{p/2}w(x)\,dx\Bigg)^{1/p}\\
\leq\ &\frac{1}{w(B)^{\kappa/p}}\Bigg(\int_B\bigg(\sum_{j}\big|\mathcal S_\alpha(f^0_j)(x)\big|^2\bigg)^{p/2}w(x)\,dx\Bigg)^{1/p}\\
& +\frac{1}{w(B)^{\kappa/p}}\Bigg(\int_B\bigg(\sum_{j}\big|\mathcal S_\alpha(f^\infty_j)(x)\big|^2\bigg)^{p/2}w(x)\,dx\Bigg)^{1/p}\\
=\ &I_1+I_2.
\end{split}
\end{equation*}
Using Theorem A and the inequality (\ref{weights}), we have
\begin{equation*}
\begin{split}
I_1&\leq \frac{1}{w(B)^{\kappa/p}}
\bigg\|\bigg(\sum_{j}\big|\mathcal S_\alpha(f^0_j)\big|^2\bigg)^{1/2}\bigg\|_{L^p_w}\\
&\leq C\cdot\frac{1}{w(B)^{\kappa/p}}
\Bigg(\int_{2B}\bigg(\sum_{j}\big|f_j(x)\big|^2\bigg)^{p/2}w(x)\,dx\Bigg)^{1/p}\\
&\leq C\bigg\|\bigg(\sum_{j}\big|f_j\big|^2\bigg)^{1/2}\bigg\|_{L^{p,\kappa}(w)}
\cdot\frac{w(2B)^{\kappa/p}}{w(B)^{\kappa/p}}\\
&\leq C\bigg\|\bigg(\sum_{j}\big|f_j\big|^2\bigg)^{1/2}\bigg\|_{L^{p,\kappa}(w)}.
\end{split}
\end{equation*}
Let us now turn to estimate the other term $I_2$. For any $\varphi\in{\mathcal C}_\alpha$, $0<\alpha\le1$, $j=1,2,\ldots$ and $(y,t)\in\Gamma(x)$, we have
\begin{align}\label{ineq:5.1}
\big|f^\infty_j*\varphi_t(y)\big|&=\bigg|\int_{(2B)^c}\varphi_t(y-z)f_j(z)\,dz\bigg|\notag\\
&\leq C\cdot t^{-n}\int_{(2B)^c\cap\{z:|y-z|\le t\}}\big|f_j(z)\big|\,dz\notag\\
&\leq C\cdot t^{-n}\sum_{\ell=1}^\infty\int_{(2^{\ell+1}B\backslash 2^{\ell}B)\cap\{z:|y-z|\le t\}}\big|f_j(z)\big|\,dz.
\end{align}
For any $x\in B$, $(y,t)\in\Gamma(x)$ and $z\in\big(2^{\ell+1}B\backslash 2^{\ell}B\big)\cap B(y,t)$, then by a direct computation, we can easily see that
\begin{equation}\label{ineq:5.2}
2t\geq |x-y|+|y-z|\geq |x-z|\geq |z-x_0|-|x-x_0|\geq 2^{\ell-1}r_B.
\end{equation}
Thus, by using the above inequalities (\ref{ineq:5.1}) and (\ref{ineq:5.2}) together with Minkowski's inequality for integrals, we deduce
\begin{equation*}
\begin{split}
\big|\mathcal S_{\alpha}(f^{\infty}_j)(x)\big|&=\left(\iint_{\Gamma(x)}\sup_{\varphi\in{\mathcal C}_\alpha}\big|f^{\infty}_j*\varphi_t(y)\big|^2\frac{dydt}{t^{n+1}}\right)^{1/2}\\
&\leq C\left(\int_{2^{\ell-2}r_B}^\infty\int_{|x-y|<t}\bigg|t^{-n}\sum_{\ell=1}^\infty\int_{2^{\ell+1}B\backslash 2^{\ell}B}\big|f_j(z)\big|\,dz\bigg|^2\frac{dydt}{t^{n+1}}\right)^{1/2}\\
&\le C\left(\sum_{\ell=1}^\infty\int_{2^{\ell+1}B\backslash 2^{\ell}B}\big|f_j(z)\big|\,dz\right)
\left(\int_{2^{\ell-2}r_B}^\infty\frac{dt}{t^{2n+1}}\right)^{1/2}\\
&\leq C\sum_{\ell=1}^\infty\frac{1}{|2^{\ell+1}B|}\int_{2^{\ell+1}B\backslash 2^{\ell}B}\big|f_j(z)\big|\,dz.
\end{split}
\end{equation*}
Then by duality and Cauchy--Schwarz inequality, we get
\begin{align}\label{key estimate}
\bigg(\sum_{j}\big|\mathcal S_\alpha(f^{\infty}_j)(x)\big|^2\bigg)^{1/2}&\leq
C\Bigg(\sum_{j}\bigg|\sum_{\ell=1}^\infty\frac{1}{|2^{\ell+1}B|}\int_{2^{\ell+1}B\backslash 2^{\ell}B}\big|f_j(z)\big|\,dz\bigg|^2\Bigg)^{1/2}\notag\\
&\leq C\sup_{(\sum_j|\zeta_j|^2)^{1/2}\leq1}\sum_{j}\bigg(\sum_{\ell=1}^\infty\frac{1}{|2^{\ell+1}B|}\int_{2^{\ell+1}B}\big|f_j(z)\big|\,dz\cdot\zeta_j\bigg)\notag\\
&\leq C\sum_{\ell=1}^\infty\frac{1}{|2^{\ell+1}B|}\int_{2^{\ell+1}B}\sup_{(\sum_j|\zeta_j|^2)^{1/2}\leq1}\bigg(\sum_{j}\big|f_j(z)\big|\cdot\zeta_j\bigg)\,dz\notag\\
&\leq C\sum_{\ell=1}^\infty\frac{1}{|2^{\ell+1}B|}\int_{2^{\ell+1}B}\bigg(\sum_{j}\big|f_j(z)\big|^2\bigg)^{1/2}\,dz.
\end{align}
Furthermore, it follows from H\"older's inequality, (\ref{key estimate}) and the $A_p$ condition that
\begin{equation*}
\begin{split}
\bigg(\sum_{j}\big|\mathcal S_\alpha(f^{\infty}_j)(x)\big|^2\bigg)^{1/2}&\leq C\sum_{\ell=1}^\infty\frac{1}{|2^{\ell+1}B|}\Bigg(\int_{2^{\ell+1}B}
\bigg(\sum_{j}\big|f_j(z)\big|^2\bigg)^{p/2}w(z)\,dz\Bigg)^{1/p}\\
&\ \times\left(\int_{2^{\ell+1}B}w(z)^{-{p'}/p}\,dz\right)^{1/{p'}}\\
&\leq C\bigg\|\bigg(\sum_{j}\big|f_j\big|^2\bigg)^{1/2}\bigg\|_{L^{p,\kappa}(w)}
\cdot w\big(2^{\ell+1}B\big)^{{(\kappa-1)}/p},
\end{split}
\end{equation*}
where we denote the conjugate exponent of $p>1$ by $p'=p/{(p-1)}$. Since $w\in A_p\subset A_\infty$ for all $1<p<\infty$. Hence, we apply the inequality (\ref{compare}) to obtain
\begin{equation*}
\begin{split}
I_2&\leq C\bigg\|\bigg(\sum_{j}\big|f_j\big|^2\bigg)^{1/2}\bigg\|_{L^{p,\kappa}(w)}
\cdot\sum_{\ell=1}^\infty\frac{w(B)^{{(1-\kappa)}/p}}{w(2^{\ell+1}B)^{{(1-\kappa)}/p}}\\
\end{split}
\end{equation*}
\begin{equation*}
\begin{split}
&\leq C\bigg\|\bigg(\sum_{j}\big|f_j\big|^2\bigg)^{1/2}\bigg\|_{L^{p,\kappa}(w)}
\cdot\sum_{\ell=1}^\infty\left(\frac{|B|}{|2^{\ell+1}B|}\right)^{\delta\cdot{(1-\kappa)}/p}\\
&\leq C\bigg\|\bigg(\sum_{j}\big|f_j\big|^2\bigg)^{1/2}\bigg\|_{L^{p,\kappa}(w)},
\end{split}
\end{equation*}
where the last series is convergent since $0<\kappa<1$ and $\delta>0$. Summarizing the above two estimates for $I_1$ and $I_2$, and then taking the supremum over all balls $B\subseteq\mathbb R^n$, we complete the proof of Theorem \ref{mainthm:1}.
\end{proof}

\begin{proof}[Proof of Theorem $\ref{mainthm:2}$]
Let $\Big(\sum_{j}\big|f_j\big|^2\Big)^{1/2}\in L^{1,\kappa}(w)$ with $0<\kappa<1$. Fix a ball $B=B(x_0,r_B)\subseteq\mathbb R^n$, we set $f_j=f^0_j+f^\infty_j$, where $f^0_j=f_j\cdot\chi_{_{2B}}$, $j=1,2,\ldots.$ Then for any given $\lambda>0$, one writes
\begin{equation*}
\begin{split}
& w\bigg(\bigg\{x\in B:\bigg(\sum_{j}\big|\mathcal S_{\alpha}(f_j)(x)\big|^2\bigg)^{1/2}>\lambda\bigg\}\bigg)\\
\leq\ &w\bigg(\bigg\{x\in B:\bigg(\sum_{j}\big|\mathcal S_{\alpha}(f^0_j)(x)\big|^2\bigg)^{1/2}>\lambda/2\bigg\}\bigg)\\
& +w\bigg(\bigg\{x\in B:\bigg(\sum_{j}\big|\mathcal S_{\alpha}(f^\infty_j)(x)\big|^2\bigg)^{1/2}>\lambda/2\bigg\}\bigg)\\
=\ &I'_1+I'_2.
\end{split}
\end{equation*}
Theorem B and the inequality (\ref{weights}) imply that
\begin{equation*}
\begin{split}
I'_1&\leq \frac{2}{\lambda}\cdot
\bigg\|\bigg(\sum_{j}\big|\mathcal S_\alpha(f^0_j)\big|^2\bigg)^{1/2}\bigg\|_{WL^1_w}\\
&\leq \frac{C}{\lambda}\cdot
\Bigg(\int_{2B}\bigg(\sum_{j}\big|f_j(x)\big|^2\bigg)^{1/2}w(x)\,dx\Bigg)\\
&\leq \frac{C\cdot w(2B)^\kappa}{\lambda}
\bigg\|\bigg(\sum_{j}\big|f_j\big|^2\bigg)^{1/2}\bigg\|_{L^{1,\kappa}(w)}\\
&\leq \frac{C\cdot w(B)^\kappa}{\lambda}
\bigg\|\bigg(\sum_{j}\big|f_j\big|^2\bigg)^{1/2}\bigg\|_{L^{1,\kappa}(w)}.
\end{split}
\end{equation*}
We now turn to deal with the other term $I'_2$. In the proof of Theorem \ref{mainthm:1}, we have already showed that for any $x\in B$ (see (\ref{key estimate})),
\begin{equation*}
\bigg(\sum_{j}\big|\mathcal S_\alpha(f^{\infty}_j)(x)\big|^2\bigg)^{1/2}\leq C\sum_{\ell=1}^\infty\frac{1}{|2^{\ell+1}B|}\int_{2^{\ell+1}B}\bigg(\sum_{j}\big|f_j(z)\big|^2\bigg)^{1/2}\,dz.
\end{equation*}
It follows directly from the $A_1$ condition that
\begin{equation*}
\begin{split}
\bigg(\sum_{j}\big|\mathcal S_\alpha(f^{\infty}_j)(x)\big|^2\bigg)^{1/2}
&\leq C\sum_{\ell=1}^\infty\frac{\underset{z\in 2^{\ell+1}B}{\mbox{ess\,inf}}\,w(z)}{w(2^{\ell+1}B)}
\int_{2^{\ell+1}B}\bigg(\sum_{j}\big|f_j(z)\big|^2\bigg)^{1/2}\,dz\\
&\leq C\sum_{\ell=1}^\infty\frac{1}{w(2^{\ell+1}B)}
\int_{2^{\ell+1}B}\bigg(\sum_{j}\big|f_j(z)\big|^2\bigg)^{1/2}w(z)\,dz\\
&\leq C\bigg\|\bigg(\sum_{j}\big|f_j\big|^2\bigg)^{1/2}\bigg\|_{L^{1,\kappa}(w)}
\sum_{\ell=1}^\infty\frac{1}{w(2^{\ell+1}B)^{1-\kappa}}.
\end{split}
\end{equation*}
In addition, since $w\in A_1\subset A_\infty$, then by the inequality (\ref{compare}), we can see that for all $x\in B$,
\begin{align}\label{pointwise}
\bigg(\sum_{j}\big|\mathcal S_\alpha(f^{\infty}_j)(x)\big|^2\bigg)^{1/2}&\leq
C\bigg\|\bigg(\sum_{j}\big|f_j\big|^2\bigg)^{1/2}\bigg\|_{L^{1,\kappa}(w)}\cdot
\frac{1}{w(B)^{1-\kappa}}\sum_{\ell=1}^\infty\frac{w(B)^{1-\kappa}}{w(2^{\ell+1}B)^{1-\kappa}}\notag\\
&\leq C\bigg\|\bigg(\sum_{j}\big|f_j\big|^2\bigg)^{1/2}\bigg\|_{L^{1,\kappa}(w)}\cdot
\frac{1}{w(B)^{1-\kappa}}\sum_{\ell=1}^\infty\left(\frac{|B|}{|2^{\ell+1}B|}\right)^{\delta^*\cdot(1-\kappa)}\notag\\
&\leq C\bigg\|\bigg(\sum_{j}\big|f_j\big|^2\bigg)^{1/2}\bigg\|_{L^{1,\kappa}(w)}\cdot
\frac{1}{w(B)^{1-\kappa}},
\end{align}
where in the last inequality we have used the fact that $\delta^*\cdot(1-\kappa)>0$. If $\bigg\{x\in B:\bigg(\sum_{j}\big|\mathcal S_{\alpha}(f^\infty_j)(x)\big|^2\bigg)^{1/2}>\lambda/2\bigg\}=\emptyset$, then the inequality
\begin{equation*}
I'_2\leq \frac{C\cdot w(B)^\kappa}{\lambda}
\bigg\|\bigg(\sum_{j}\big|f_j\big|^2\bigg)^{1/2}\bigg\|_{L^{1,\kappa}(w)}
\end{equation*}
holds trivially. Now if instead we suppose that
$$\bigg\{x\in B:\bigg(\sum_{j}\big|\mathcal S_{\alpha}(f^\infty_j)(x)\big|^2\bigg)^{1/2}>\lambda/2\bigg\}\neq\emptyset.$$
Then by the pointwise inequality (\ref{pointwise}), we have
\begin{equation*}
\lambda\leq C\bigg\|\bigg(\sum_{j}\big|f_j\big|^2\bigg)^{1/2}\bigg\|_{L^{1,\kappa}(w)}\cdot
\frac{1}{w(B)^{1-\kappa}},
\end{equation*}
which is equivalent to
\begin{equation*}
w(B)\leq\frac{C\cdot w(B)^\kappa}{\lambda}\bigg\|\bigg(\sum_{j}\big|f_j\big|^2\bigg)^{1/2}\bigg\|_{L^{1,\kappa}(w)}.
\end{equation*}
Therefore
\begin{equation*}
I'_2\leq w(B)\leq\frac{C\cdot w(B)^\kappa}{\lambda}\bigg\|\bigg(\sum_{j}\big|f_j\big|^2\bigg)^{1/2}\bigg\|_{L^{1,\kappa}(w)}.
\end{equation*}
Summing up the above estimates for $I'_1$ and $I'_2$, and then taking the supremum over all balls $B\subseteq\mathbb R^n$ and all $\lambda>0$, we finish the proof of Theorem \ref{mainthm:2}.
\end{proof}

\section{Proofs of Theorems \ref{mainthm:3} and \ref{mainthm:4}}

\begin{proof}[Proof of Theorem $\ref{mainthm:3}$]
Let $\Big(\sum_{j}\big|f_j\big|^2\Big)^{1/2}\in L^{p,\Phi}$ with $1<p<\infty$. For any ball $B=B(x_0,r)\subseteq\mathbb R^n$ with $x_0\in\mathbb R^n$ and $r>0$, we write $f_j=f^0_j+f^\infty_j$, where $f^0_j=f_j\cdot\chi_{_{2B}}$, $j=1,2,\ldots$. Then we have
\begin{equation*}
\begin{split}
&\frac{1}{\Phi(r)^{1/p}}\Bigg(\int_{B(x_0,r)}\bigg(\sum_{j}\big|\mathcal S_\alpha(f_j)(x)\big|^2\bigg)^{p/2}\,dx\Bigg)^{1/p}\\
\leq\ &\frac{1}{\Phi(r)^{1/p}}\Bigg(\int_{B(x_0,r)}\bigg(\sum_{j}\big|\mathcal S_\alpha(f^0_j)(x)\big|^2\bigg)^{p/2}\,dx\Bigg)^{1/p}\\
& +\frac{1}{\Phi(r)^{1/p}}\Bigg(\int_{B(x_0,r)}\bigg(\sum_{j}\big|\mathcal S_\alpha(f^{\infty}_j)(x)\big|^2\bigg)^{p/2}\,dx\Bigg)^{1/p}\\
=\ &J_1+J_2.
\end{split}
\end{equation*}
Applying Theorem C and the doubling condition (\ref{doubling}), we obtain
\begin{equation*}
\begin{split}
J_1&\leq \frac{1}{\Phi(r)^{1/p}}
\bigg\|\bigg(\sum_{j}\big|\mathcal S_\alpha(f^0_j)\big|^2\bigg)^{1/2}\bigg\|_{L^p}\\
&\leq C\cdot\frac{1}{\Phi(r)^{1/p}}
\Bigg(\int_{2B}\bigg(\sum_{j}\big|f_j(x)\big|^2\bigg)^{p/2}\,dx\Bigg)^{1/p}\\
&\leq C\bigg\|\bigg(\sum_{j}\big|f_j\big|^2\bigg)^{1/2}\bigg\|_{L^{p,\Phi}}
\cdot\frac{\Phi(2r)^{1/p}}{\Phi(r)^{1/p}}\\
&\leq C\bigg\|\bigg(\sum_{j}\big|f_j\big|^2\bigg)^{1/2}\bigg\|_{L^{p,\Phi}}.
\end{split}
\end{equation*}
We now turn to estimate the other term $J_2$. We first use the inequality (\ref{key estimate}) and H\"older's inequality to obtain
\begin{equation*}
\begin{split}
\bigg(\sum_{j}\big|\mathcal S_\alpha(f^{\infty}_j)(x)\big|^2\bigg)^{1/2}
&\leq C\sum_{\ell=1}^\infty\frac{1}{|B(x_0,2^{\ell+1}r)|}\int_{2^{\ell+1}B}
\bigg(\sum_{j}\big|f_j(z)\big|^2\bigg)^{1/2}\,dz\\
&\leq C\sum_{\ell=1}^\infty\frac{1}{|B(x_0,2^{\ell+1}r)|^{1/p}}\Bigg(\int_{2^{\ell+1}B}
\bigg(\sum_{j}\big|f_j(z)\big|^2\bigg)^{p/2}\,dz\Bigg)^{1/p}\\
&\leq C\bigg\|\bigg(\sum_{j}\big|f_j\big|^2\bigg)^{1/2}\bigg\|_{L^{p,\Phi}}
\cdot\sum_{\ell=1}^\infty\frac{\Phi(2^{\ell+1}r)^{1/p}}{|B(x_0,2^{\ell+1}r)|^{1/p}}.
\end{split}
\end{equation*}
Hence
\begin{equation*}
J_2\leq C\bigg\|\bigg(\sum_{j}\big|f_j\big|^2\bigg)^{1/2}\bigg\|_{L^{p,\Phi}}
\cdot\sum_{\ell=1}^\infty\frac{|B(x_0,r)|^{1/p}}{\Phi(r)^{1/p}}
\cdot\frac{\Phi(2^{\ell+1}r)^{1/p}}{|B(x_0,2^{\ell+1}r)|^{1/p}}.
\end{equation*}
Since $1\le D(\Phi)<2^n$, then by using the doubling condition (\ref{doubling}) of $\Phi$, we know that
\begin{align}\label{C}
\sum_{\ell=1}^\infty\frac{|B(x_0,r)|^{1/p}}{\Phi(r)^{1/p}}
\cdot\frac{\Phi(2^{\ell+1}r)^{1/p}}{|B(x_0,2^{\ell+1}r)|^{1/p}}
&\leq C\sum_{\ell=1}^\infty\left(\frac{D(\Phi)}{2^{n}}\right)^{{(\ell+1)}/p}\notag\\
&\leq C.
\end{align}
Therefore
\begin{equation*}
J_2\leq C\bigg\|\bigg(\sum_{j}\big|f_j\big|^2\bigg)^{1/2}\bigg\|_{L^{p,\Phi}}.
\end{equation*}
Combining the above estimates for $J_1$ and $J_2$, and then taking the supremum over all balls $B=B(x_0,r)\subseteq\mathbb R^n$, we complete the proof of Theorem \ref{mainthm:3}.
\end{proof}

\begin{proof}[Proof of Theorem $\ref{mainthm:4}$]
Let $\Big(\sum_{j}\big|f_j\big|^2\Big)^{1/2}\in L^{1,\Phi}$. For each fixed ball $B=B(x_0,r)\subseteq\mathbb R^n$, we again decompose $f_j$ as $f_j=f^0_j+f^\infty_j$, where $f^0_j=f_j\cdot\chi_{_{2B}}$, $j=1,2,\ldots$. For any given $\lambda>0$, then we write
\begin{equation*}
\begin{split}
& \bigg|\bigg\{x\in B:\bigg(\sum_{j}\big|\mathcal S_{\alpha}(f_j)(x)\big|^2\bigg)^{1/2}>\lambda\bigg\}\bigg|\\
\leq\ &\bigg|\bigg\{x\in B:\bigg(\sum_{j}\big|\mathcal S_{\alpha}(f^0_j)(x)\big|^2\bigg)^{1/2}>\lambda/2\bigg\}\bigg|\\
& +\bigg|\bigg\{x\in B:\bigg(\sum_{j}\big|\mathcal S_{\alpha}(f^\infty_j)(x)\big|^2\bigg)^{1/2}>\lambda/2\bigg\}\bigg|\\
=\ &J'_1+J'_2.
\end{split}
\end{equation*}
Theorem D and the doubling condition (\ref{doubling}) imply that
\begin{equation*}
\begin{split}
J'_1&\leq \frac{2}{\lambda}\cdot
\bigg\|\bigg(\sum_{j}\big|\mathcal S_\alpha(f^0_j)\big|^2\bigg)^{1/2}\bigg\|_{WL^1}\\
&\leq \frac{C}{\lambda}\cdot
\Bigg(\int_{2B}\bigg(\sum_{j}\big|f_j(x)\big|^2\bigg)^{1/2}\,dx\Bigg)\\
&\leq \frac{C\cdot\Phi(2r)}{\lambda}
\bigg\|\bigg(\sum_{j}\big|f_j\big|^2\bigg)^{1/2}\bigg\|_{L^{1,\Phi}}\\
&\leq \frac{C\cdot\Phi(r)}{\lambda}
\bigg\|\bigg(\sum_{j}\big|f_j\big|^2\bigg)^{1/2}\bigg\|_{L^{1,\Phi}}.
\end{split}
\end{equation*}
We turn our attention to the estimate of $J'_2$. Using the preceding estimate (\ref{key estimate}), we can deduce that for all $x\in B(x_0,r)$,
\begin{equation*}
\begin{split}
\bigg(\sum_{j}\big|\mathcal S_\alpha(f^{\infty}_j)(x)\big|^2\bigg)^{1/2}&\leq C\sum_{\ell=1}^\infty\frac{1}{|B(x_0,2^{\ell+1}r)|}\int_{2^{\ell+1}B}
\bigg(\sum_{j}\big|f_j(z)\big|^2\bigg)^{1/2}\,dz\\
&\leq C\bigg\|\bigg(\sum_{j}\big|f_j\big|^2\bigg)^{1/2}\bigg\|_{L^{1,\Phi}}
\sum_{\ell=1}^\infty\frac{\Phi(2^{\ell+1}r)}{|B(x_0,2^{\ell+1}r)|}\\
&\leq C\bigg\|\bigg(\sum_{j}\big|f_j\big|^2\bigg)^{1/2}\bigg\|_{L^{1,\Phi}}
\cdot\frac{\Phi(r)}{|B(x_0,r)|}\\
&\ \times\sum_{\ell=1}^\infty
\frac{|B(x_0,r)|}{\Phi(r)}\cdot\frac{\Phi(2^{\ell+1}r)}{|B(x_0,2^{\ell+1}r)|}.
\end{split}
\end{equation*}
Note that $1\le D(\Phi)<2^n$. Arguing as in the proof of (\ref{C}), we can get
\begin{align}
\sum_{\ell=1}^\infty\frac{|B(x_0,r)|}{\Phi(r)}\cdot\frac{\Phi(2^{\ell+1}r)}{|B(x_0,2^{\ell+1}r)|}&\le
\sum_{\ell=1}^\infty\left(\frac{D(\Phi)}{2^n}\right)^{\ell+1}\notag\\
&\le C.
\end{align}
Hence, for any $x\in B(x_0,r)$,
\begin{equation}\label{pointwise2}
\bigg(\sum_{j}\big|\mathcal S_\alpha(f^{\infty}_j)(x)\big|^2\bigg)^{1/2}
\leq C\bigg\|\bigg(\sum_{j}\big|f_j\big|^2\bigg)^{1/2}\bigg\|_{L^{1,\Phi}}
\cdot\frac{\Phi(r)}{|B(x_0,r)|}.
\end{equation}
If $\bigg\{x\in B:\bigg(\sum_{j}\big|\mathcal S_{\alpha}(f^\infty_j)(x)\big|^2\bigg)^{1/2}>\lambda/2\bigg\}=\emptyset$, then the inequality
\begin{equation*}
J'_2\leq\frac{C\cdot\Phi(r)}{\lambda}\bigg\|\bigg(\sum_{j}\big|f_j\big|^2\bigg)^{1/2}\bigg\|_{L^{1,\Phi}}
\end{equation*}
holds trivially. Now we may suppose that $$\bigg\{x\in B:\bigg(\sum_{j}\big|\mathcal S_{\alpha}(f^\infty_j)(x)\big|^2\bigg)^{1/2}>\lambda/2\bigg\}\neq\emptyset.$$
Then by the pointwise inequality (\ref{pointwise2}), we can see that
\begin{equation*}
\lambda\leq C\bigg\|\bigg(\sum_{j}\big|f_j\big|^2\bigg)^{1/2}\bigg\|_{L^{1,\Phi}}
\cdot\frac{\Phi(r)}{|B(x_0,r)|},
\end{equation*}
which is equivalent to
\begin{equation*}
\big|B(x_0,r)\big|\leq \frac{C\cdot\Phi(r)}{\lambda}\bigg\|\bigg(\sum_{j}\big|f_j\big|^2\bigg)^{1/2}\bigg\|_{L^{1,\Phi}}.
\end{equation*}
Therefore
\begin{equation*}
J'_2\leq\big|B(x_0,r)\big|\leq \frac{C\cdot\Phi(r)}{\lambda}
\bigg\|\bigg(\sum_{j}\big|f_j\big|^2\bigg)^{1/2}\bigg\|_{L^{1,\Phi}}.
\end{equation*}
Summing up the above estimates for $J'_1$ and $J'_2$, and then taking the supremum over all balls $B=B(x_0,r)\subseteq\mathbb R^n$ and all $\lambda>0$, we conclude the proof of Theorem \ref{mainthm:4}.
\end{proof}

\end{document}